\documentclass[11pt,a4paper]{article}
\usepackage{mathrsfs}
\usepackage{amsfonts}
\usepackage[leqno]{amsmath}
\usepackage{graphicx}
\usepackage{latexsym}
\usepackage{amsmath,amsfonts,amssymb,amsthm,mathrsfs,euscript,makeidx,color}
\usepackage{enumerate}

\usepackage{indentfirst}
\usepackage{epstopdf}

\usepackage[colorlinks,linkcolor=blue,anchorcolor=green,citecolor=red]{hyperref}


\oddsidemargin  = 0pt \evensidemargin = 0pt \marginparwidth = 1in
\marginparsep   = 0pt \leftmargin     = 1.25in \topmargin =0pt
\headheight     = 0pt \headsep        = 1.5em \topskip =0pt
\footskip       =0.35in \textheight   = 9.1in \textwidth =6.5in

\def\sqr#1#2{{\vcenter{\vbox{\hrule height.#2pt
				\hbox{\vrule width.#2pt height#1pt \kern#1pt \vrule width.#2pt}
				\hrule height.#2pt}}}}
%

%


\def\5n{\negthinspace \negthinspace \negthinspace \negthinspace \negthinspace }
\def\4n{\negthinspace \negthinspace \negthinspace \negthinspace }
\def\3n{\negthinspace \negthinspace \negthinspace }
\def\2n{\negthinspace \negthinspace }
\def\1n{\negthinspace }


\def\dbR{\mathbb{R}}


\def\ds{\displaystyle}

\def\ns{\noalign{\ss}}

\def\ss{\smallskip}

\def\q{\quad}
\def\qq{\qquad}


\def\({\Big (}
\def\){\Big )}
\def\[{\Big[}
\def\]{\Big]}



\def\d{\delta}




\def\bde{\begin{definition}\label}
	\def\ede{\end{definition}}
\def\be{\begin{equation}}
\def\bel{\begin{equation}\label}
\def\ee{\end{equation}}
\def\bt{\begin{theorem}\label}
	\def\et{\end{theorem}}
\def\bc{\begin{corollary}\label}
	\def\ec{\end{corollary}}
\def\bl{\begin{lemma}\label}
	\def\el{\end{lemma}}
\def\bp{\begin{proposition}\label}
	\def\ep{\end{proposition}}
\def\bas{\begin{assumption}\label}
	\def\eas{\end{assumption}}
\def\br{\begin{remark}\label}
	\def\er{\end{remark}}
\def\bex{\begin{example}\label}
	\def\ex{\end{example}}
\def\ba{\begin{array}}
	\def\ea{\end{array}}
\def\ben{\begin{enumerate}}
	\def\een{\end{enumerate}}

\def\square#1{\vbox{\hrule\hbox{\vrule height#1%
			\kern#1\vrule}\hrule}}
\def\rectangle#1#2{\vbox{\hrule\hbox{\vrule height#1%
			\kern#2\vrule}\hrule}}

\font\tenbb=msbm10 \font\sevenbb=msbm7 \font\fivebb=msbm5

\newfam\bbfam
\scriptscriptfont\bbfam=\fivebb \textfont\bbfam=\tenbb
\scriptfont\bbfam=\sevenbb

\newtheorem{theorem}{\indent Theorem}[section]
\newtheorem{definition}[theorem]{\indent Definition}
\newtheorem{proposition}[theorem]{\indent Proposition}
\newtheorem{corollary}[theorem]{\indent Corollary}
\newtheorem{lemma}[theorem]{\indent Lemma}
\newtheorem{remark}[theorem]{\indent Remark}
\newtheorem{example}[theorem]{\indent Example}

\newtheorem{assumption}[theorem]{\indent Assumption}

\makeatletter

\@addtoreset{equation}{section}
\makeatother

\def\bea{\begin{equation*}}
\def\eea{\end{equation*}}

\def\bel{\begin{equation}\label}
\def\eel{\end{equation}}

\def\ba{\begin{array}}
\def\ea{\end{array}}
\newcommand{\ad}{&\!\!\!\displaystyle}

\def\({\Big (}
\def\){\Big )}
\def\[{\Big[}
\def\]{\Big]}
\def\q{\quad}
\def\qq{\qquad}

\def\d{\delta}


\def\ds{\displaystyle}

\def\ns{\noalign{\smallskip}}

\begin{document}
	
\title{Uniqueness of Dissipative Solution for Camassa-Holm Equation with Peakon-Antipeakon Initial
Data }

\author{Hong Cai \footnote{Department of Mathematics and Research Institute for Mathematics and Interdisciplinary Sciences, Qingdao University of Science and Technology, Qingdao, Shandong, P.R. China, 266061. Email: caihong19890418@163.com. Partially supported the National Natural Science Foundation of China (No. 11801295) and the Shandong Provincial Natural Science Foundation, China (No. ZR2018BA008).},\quad
Geng Chen \footnote{Department of Mathematics, University of Kansas, Lawrence, KS 66045, U.S.A. Email: gengchen@ku.edu. Partially supported by NSF grants DMS-1715012 and DMS-2008504.} ,\quad
Hongwei Mei \footnote{Department of Statistics, Rice University, Houston, TX 77705, U.S.A. Email: hongwei.mei@rice.edu.}}

\maketitle
\begin{abstract}
We give a proof for the uniqueness of dissipative solution for the Camassa-Holm equation
with some peakon-antipeakon initial data following Dafermos' earlier result in \cite{Daf} on the Hunter-Saxton equation.
Our result shows that two existing global existence frameworks, through the vanishing viscosity method by Xin-Zhang in \cite{XZ1} and the transformation of coordinate method for dissipative solutions by Bressan-Constantin in \cite{BC3}, give the same solution, for a special but typical initial data forming finite time gradient blowup. \bigbreak
\noindent

{\bf \normalsize Keywords.} {Camassa--Holm equation;\, uniqueness;\, dissipative solution.}\bigbreak

\end{abstract}
\section{Introduction}

We consider the Cauchy problem for Camassa--Holm equation
  \begin{equation}\label{CH-eq}
 u_t+\(\frac{u^2}2\)_x=-P_x,\qquad u(0,x)=u_0(x)\in H^1(\dbR),
 \end{equation}
where the nonlocal source term $P$ is defined as a convolution
\begin{equation}\label{CH-eq2}
P:=\frac12e^{-|x|}*\(u^2+\frac{u_x^2}2\).
\end{equation}
The Camassa--Holm equation was originally derived to model the surface waves  where
the term $u=u(t,x)$ represents the horizontal velocity of the fluid (see \cite{CH}).\ss

It is well known that the solution of Camassa-Holm equation might form finite time singularities. Or more precisely, the gradient of solution $u$ might blow up in finite time which leads to the wave breaking phenomena. Therefore it is imperative to consider the weak solutions in the space of $H^1(\dbR)$ at any time $t$. However, weak solutions of the initial value problem \eqref{CH-eq} naturally lose uniqueness after singularity formation. This nonuniqueness phenomena can be intuitively explained in the following way. Since $H^1(\dbR)\hookrightarrow C^{\frac12}(\dbR)$ by the Sobolev embedding theorem, roughly speaking, the characteristic equation of \eqref{CH-eq} behaves similarly to the ODE
$$\frac{dx}{dt}=\sqrt{|x|},$$
which has infinite many solutions when $x(0)=0$. They are corresponding
to multiple solutions (such as conservative and dissipative solutions) of Camassa-Holm equation after singularity formation.


 The global existence of ($H^1$) weak solutions for the Camassa--Holm equation  has been well established by two  frameworks: one applies the method of vanishing viscosity (see \cite{XZ1}) and the other one is to introduce a new semilinear system on new characteristic coordinates (see \cite{BC2, BC3}).
Especially, in \cite{BC2, BC3}, by the transformation of coordinates method, the existences of energy conservative and (fully) dissipative solutions have both been established. Furthermore, the existence of $\alpha$-dissipative solution, which has partial energy dissipation, has also been established in \cite{GHR}. The dissipation rate $\alpha$ might vary from $0\%$ to $100\%$, i.e. from conservative to fully dissipative solutions. Here we use the terminology fully dissipative solution to distinguish from the $\alpha$-dissipative solution with $\alpha<100\%$. Without confusion, throughout this paper, the dissipative solution always means the fully dissipative one. One can find other global $H^1$ existence results, such as in \cite{HR}. Some other interesting results can be found in \cite{HHXR2008,HHXR2009,KG2016}.

To select a physical solution for the Camassa-Holm equation, one needs to assume an additional physical admissible condition such as the conservation or full dissipation of energy. This additional condition will help selecting a unique solution flow after formation of cusp singularity.

The uniqueness for the energy conservative solution was first proved  in \cite{BCZ}, where Dafermos' framework in \cite{Daf} for the dissipative solution of Hunter-Saxton equation was used as one of the main techniques.

In this paper, we focus on the uniqueness of energy dissipative solution for the Camassa-Holm equation. We will present a proof on the uniqueness of dissipative solution with peakon-antipeakon initial data. For this speical initial data, we show that there is an easy way to apply Dafermos' framework in \cite{Daf}. The Cauchy problem considered in this paper is a typical problem forming cusp singularity. So our result indicates that solutions obtained from the vanishing viscosity method by Xin-Zhang in \cite{XZ1} and the transformation of coordinates method by Bressan-Constantin in \cite{BC3} are the same.

\begin{figure}[htp] \centering
		\includegraphics[width=.3\textwidth]{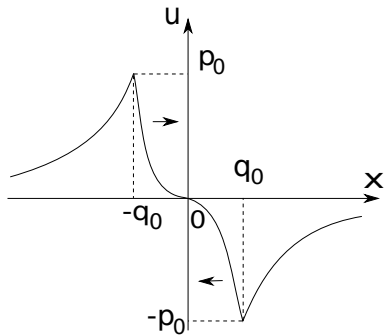}
		\caption{The Initial data. Two interacting peakons.\label{fig1}}
	\end{figure}

Now let's introduce the peakon-antipeakon initial data, as following
\bel{initialu}u_0(x)=\frac12p_0\(e^{-|x+q_0|}-e^{-|x-q_0|}\)\eel
for some positive constants $p_0, q_0>0$.
These initial data describe two interacting peakons (one bright and one dark). See Figure \ref{fig1}.  This special initial value problem enjoys a unique regular Lipschitz continuous solution before two peakons interact with each other. In fact, the cusp singularity forms at this interaction. Furthermore, the blowup time can be also explicitly calculated. We address the uniqueness of dissipative solution to
\eqref{CH-eq}, \eqref{CH-eq2} and \eqref{initialu}.

Now we first define the dissipative solution for Camassa-Holm equation, then introduce the main uniqueness theorem.
\begin{definition} The function $u=u(t,x)$, defined on $[0,\infty)\times\dbR$, is called a (global) solution of  \eqref{CH-eq}--\eqref{CH-eq2} if it is H\"{o}lder continuous  such that  $u(t,\cdot)\in H^1(\dbR)$ at every $t$, and the map $t\mapsto u(t,\cdot)$ is Lipschitz continuous  from $[0,\infty)$ to $L^2(\dbR)$ and satisfies the initial condition  $u(0,x)=u_0(x)$ together with the following equality between functions in $L^2(\dbR)$:
	\bel{weak}
	\frac{d}{dt}u=-uu_x-P_x,\quad  \hbox{ for }\ a.e.\  t\geq 0.
	\eel
	
Moreover, the solution $u(t,x)$ is said to be dissipative if it satisfies the Oleinik type inequality
\bel{ol}u_x(t,x)\leq C(1+t^{-1}),\quad t>0,\quad x\in\mathbb{R},\eel
for some constant $C$, and the total energy
	\bel{edef}
	E(t):=\int_{\dbR}\Big(u^2(t,x)+u_x^2(t,x)\Big)dx
	\eel
satisfies
$$E(t)\leq E(0)=:E_0, \quad\hbox{for any }\ t\geq0.$$
\end{definition}

\begin{theorem} \label{mainthm}
The dissipative solution $u=u(t,x)$ of \eqref{CH-eq}--\eqref{CH-eq2} with the peakon-antipeakon initial data \eqref{initialu} is unique.
\end{theorem}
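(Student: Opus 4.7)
\bigskip

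\noindent\textbf{Proof strategy.} The plan is to split the analysis into three regimes by exploiting the odd symmetry of \eqref{initialu}: reduce to $u(t,0)\equiv 0$ by symmetry, match any dissipative solution to the explicit smooth peakon-antipeakon solution on the pre-collision interval $[0,T^*)$, and then use the $L^2$-Lipschitz continuity of $t\mapsto u(t,\cdot)$ together with the Oleinik bound \eqref{ol} and the energy inequality to force $u\equiv 0$ afterwards, following the Dafermos framework of \cite{Daf}.

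First I would invoke the invariance of \eqref{CH-eq}--\eqref{CH-eq2} under the reflection $(x,u)\mapsto(-x,-u)$. Because the kernel $e^{-|x|}$ is even, $u^2+\frac12 u_x^2$ is even whenever $u$ is odd, and consequently $P$ is even and $P_x$ is odd. Hence whenever $u$ is a dissipative solution with datum \eqref{initialu}, so is $\tilde u(t,x):=-u(t,-x)$, with identical Oleinik and energy bounds. Any uniqueness statement we prove within the class of odd dissipative solutions therefore extends to the full class, so I may restrict to odd $u$, giving $u(t,0)\equiv 0$ and permitting the analysis to be carried out on the half-line $x\ge 0$.

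Next, on $[0,T^*)$, the two-peakon ODE system specialized to the symmetric pair yields the explicit smooth solution $u^*(t,x)=\frac12 p(t)\bigl(e^{-|x+q(t)|}-e^{-|x-q(t)|}\bigr)$, Lipschitz in $x$, where $T^*$ is the collision time at which $q(t)\searrow 0$. I would show that any dissipative solution $u$ coincides with $u^*$ on this interval. Since $\inf_x u^*_x(t,\cdot)>-\infty$ on every $[0,T^*-\delta]$ and $P_x$ depends Lipschitz-continuously on $u$ in $H^1(\dbR)$, the classical well-posedness theory for the Camassa--Holm equation applies: a Gr\"onwall estimate for $\|u-u^*\|_{H^1}$ initialized at $t=0$, combined with the characteristic ODE $\dot X=u(t,X)$ and the weak identity \eqref{weak}, pins $u=u^*$ on $[0,T^*)$.

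Finally, and this is where Dafermos' framework enters, I would show $u\equiv 0$ for $t\ge T^*$. A direct computation from the explicit formulas gives $\|u^*(t,\cdot)\|_{L^2}\to 0$ as $t\nearrow T^*$, even though the energy $\int(u^{*2}+u^{*2}_x)\,dx=E_0$ is conserved on $[0,T^*)$ and concentrates at $x=0$. By the $L^2$-Lipschitz continuity of $t\mapsto u(t,\cdot)$, this yields $u(T^*,\cdot)\equiv 0$ in $L^2$. Since $u(T^*,\cdot)\in H^1(\dbR)$ by definition, the vanishing $L^2$ representative forces $u(T^*,\cdot)=0$ in $H^1$ and hence $E(T^*)=0$; the dissipation inequality $E(t)\le E(T^*)=0$ for $t\ge T^*$ then forces $u\equiv 0$ on $[T^*,\infty)$.

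The main obstacle is certifying the passage through $T^*$: one must rule out the possibility that the dissipative solution ``detaches'' from $u^*$ before $T^*$ by acquiring a singular slope prematurely, and must ensure that the concentrated energy is genuinely lost at $T^*$ rather than re-emerging afterwards. The Oleinik condition \eqref{ol} is decisive because it precludes any positive singular part of $u_x$ for $t>0$; the odd symmetry combined with the $L^2$-continuity above then excludes a negative singular part as well. Adapting Dafermos' Lagrangian/generalized-characteristic argument from \cite{Daf} to accommodate the nonlocal $P_x$ is the main technical work; the symmetric, exponentially localized structure of the peakon-antipeakon and the bound $\|P_x\|_{L^\infty}\lesssim E_0$ reduce the nonlocal term to a controlled lower-order perturbation in the energy bookkeeping, which is what makes the application of Dafermos' framework ``easy'' in this setting.
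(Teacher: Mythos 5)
There are two genuine gaps, and together they bypass exactly the part of the argument that is non-trivial.

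First, the symmetry reduction is circular. From the invariance of \eqref{CH-eq}--\eqref{CH-eq2} under $(x,u)\mapsto(-x,-u)$ you may conclude that $\tilde u(t,x)=-u(t,-x)$ is \emph{another} dissipative solution with the same data, but not that $u=\tilde u$; that identity is precisely the uniqueness you are trying to prove. Consequently, proving uniqueness within the class of odd solutions does not extend to the full class (non-odd solutions, if any, would occur in mirror pairs and be missed entirely), and you are not entitled to assume $u(t,0)\equiv 0$ or to work on a half-line. The paper makes no such reduction: it obtains $u(T_0,\cdot)=0$ for an \emph{arbitrary} dissipative solution from the pre-blowup analysis and the $L^2$-Lipschitz continuity of $t\mapsto u(t,\cdot)$, which is the part of your argument that is sound.

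Second, and more seriously, your concluding step invokes ``$E(t)\le E(T^*)=0$ for $t\ge T^*$,'' i.e.\ monotonicity of the energy. The definition in the paper only requires $E(t)\le E(0)=E_0$ for all $t\ge 0$ (this weaker condition is chosen deliberately so that both the Xin--Zhang and the Bressan--Constantin solutions qualify), so knowing $E(T_0)=0$ gives you nothing about $E(t)$ for $t>T_0$. Ruling out a nonzero continuation emanating from the zero profile at $t=T_0$ with energy re-emerging up to $E_0$ is exactly the content of the paper's Section 5: one writes the difference quotient $\omega=(u_\xi-u_\eta)/(x_\xi-x_\eta)$ along pairs of characteristics, passes to the limit $\xi\to\eta$ to obtain a Riccati-type integral identity for $w_\eta(t)=u_x(t,x_\eta(t))$ with source $u^2-P$, and then runs an iterative bootstrap ($D_{i+1}=18D_i^2\delta_0^2\to 0$) using the smallness of $u$, $P$ and $P_x$ near $T_0$ to force $u=u_x=0$ on $[T_0,T_0+\delta_0]$. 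Your remark that Oleinik ``precludes a positive singular part'' and symmetry ``excludes a negative singular part'' does not substitute for this: $u_x(t,\cdot)\in L^2$ has no singular part by definition, and the danger is not a singular measure but an ordinary nonzero $H^1$ solution growing out of zero data (e.g.\ a time-reversed, sign-flipped peakon--antipeakon pair, which \eqref{ol} does exclude, but only after a quantitative argument of the above type, not by inspection). The pre-collision matching to the explicit solution and the observation $\|u(T_0,\cdot)\|_{L^2}=0$ are correct and agree with the paper; the post-collision regime is where the proof is missing.
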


%

\begin{remark}
The authors noticed a preprint posted by Jamr\'oz (see \cite{Jam} in its fifth version now), who claims the uniqueness of energy dissipative solution for the Camassa-Holm equation with general initial data. In contrast, we give a different and simpler proof for the Cauchy problem with peakon-antipeakon initial data.

In the definition of dissipation solution, \eqref{ol} can be changed to  that $u_x$ is bounded above in any bounded time interval as in \cite{Daf}, while the uniqueness theorem still holds. The current definition includes both solutions in \cite{XZ1} and \cite{BC3}.
\end{remark}

We divide the rest of the paper into $4$ sections. In Section 2, we review the regular solution before the peakon interaction. In Section 3, we introduce a transformation showing why Dafermos' idea on Hunter-Saxton equation can be applied to  \eqref{CH-eq}--\eqref{initialu}. In Section 4, we prove the existence and uniqueness of characteristic. The proof of Theorem \ref{mainthm} will be given in Section 5.

\section{Solution before blowup}\label{sec_before}

The existence of a unique solution for  \eqref{CH-eq}--\eqref{initialu} before blowup has been well established. In fact, the solution can be written out explicitly. Here we review this result using the reference \cite{BC3}.

As shown in \cite{BC3}, there exists an explosion time $T_0$ and a unique solution for \eqref{CH-eq}--\eqref{initialu} before blowup time $T_0$ satisfying
\bel{solu}u(t,x)=\frac12p(t)\(e^{-|x+q(t)|}-e^{-|x-q(t)|}\)\eel
with $p,q$ Lipschitz continuous such that $p(0)>0, q(0)>0$, and
\bel{initialvalue}\left\{\ba{ll}
\ad T_0=\frac1{H_0}\ln\(\frac{p_0+H_0}{p_0-H_0}\),\\
\ns\ad p(t)= H_0\frac{(p_0+H_0)+(p_0-H_0)e^{H_0t}}{(p_0+H_0)-(p_0-H_0)e^{H_0t}},\quad t\in [0,T_0),\\
\ns\ad q(t)=q_0+\ln\(\frac{(p_0+H_0)e^{-H_0t/2}+(p_0-H_0)e^{H_0t/2}}{2p_0}\),\quad t\in [0,T_0),\ea\right.\eel
where
$H^2_0=p^2_0(1-e^{-2q_0})$. And $(p(t),q(t))$ is the solution of the following ordinary differential equations
\bel{ODEpq}\left\{\ba{ll}\ad\dot q=\frac p2(e^{-2q}-1),\\
\ns\ad \dot p=\frac{p^2}2e^{-2q},\ea\right.\q\text{with}\q p(0)=p_0,\ q(0)=q_0.\eel
Note that
$p(t), q(t)> 0$ for $t\in[0,T_0)$.
Moreover, the energy is conserved before $T_0$, i.e.
$$p^2(t)(1-e^{-2q(t)})=H^2_0,\q0\leq t<T_0.$$
Simple computation from \eqref{initialvalue} yields that
\bel
{pqlim}\lim_{t\rightarrow T_0^-}p(t)=\infty,\q\lim_{t\rightarrow T_0^-}q(t)=0\quad \hbox{and}\quad\lim_{t\rightarrow T_0^-}p(t)\, q(t)=0.\eel

Throughout the following, we use $x_\xi(\cdot)$ to denote the characteristic initiated from $(0,\xi)$, i.e.
$$\dot x_\xi(t)=u(t,x_\xi(t)),\q x_\xi(0)=\xi.$$
By \eqref{ODEpq}, it is easy to calculate that $x=-q(t)$ and $x=q(t)$ are both characteristics, i.e.
$x_{-q_0}(t)=-q(t)$ and $x_{q_0}(t)=q(t)$. Thus, in view of \eqref{pqlim}, we have
\begin{equation}\label{charq0}
\lim_{t\rightarrow T_0^-}x_\xi(t)=0\q \hbox{for all}\q\xi\in[-q_0,q_0].
\end{equation}
Furthermore, using \eqref{solu} and \eqref{pqlim}, we can see that, as $t\rightarrow T_0^-$,
\bel{uxt0}
\left\{
\begin{array}{ll}
u_x (t, x_\xi(t))\rightarrow 0,& \hbox{when}\ \xi<-q_0\ \hbox{or}\ \xi>q_0,\\[2mm]
|u_x (t, x_\xi(t))|\rightarrow \infty,& \hbox{when}\ -q_0<\xi<q_0,
\end{array}
\right.
\eel
and
\bel{ut0}
u(t, x_\xi(t))\rightarrow 0,\qquad \hbox{for any}\ \xi\in(-\infty,\infty).
\eel
This means that all the characteristics starting from $[-q_0,q_0]$ will merge into one point 0 at the explosion time $T_0$. The non-uniqueness phenomenon happens after $T_0$.

In this paper, we will prove the uniqueness of dissipative solution for \eqref{CH-eq}--\eqref{initialu}, where the solution satisfies $$u(t,x)=0,\quad \hbox{when}\quad t\geq T_0.$$ So solutions established in \cite{XZ1} and \cite{BC3} both equal to this unique solution.

\begin{remark}{\rm
In \cite{XZ2}, the authors proved a weak-strong uniqueness result for the solution from the varnishing viscosity method.
This shows the uniqueness of dissipative solution before $T_0$.
Unfortunately, this result does not apply to the problem \eqref{CH-eq}--\eqref{initialu} considered in this paper. In fact, by  \eqref{solu} and \eqref{pqlim}, $\|u_x(\cdot,t)\|_{L^\infty}\simeq p(t) \simeq1/(T_0-t),$ as $t\rightarrow T_0^-$  which is not $L^2_{loc}([0,T_0])$.  So Theorem 2 in \cite{XZ2} fails to apply to this case.}
\end{remark}

\section{Why Dafermos' idea in \cite{Daf} applies to  \eqref{CH-eq}--\eqref{initialu}?\label{section2}}
In this section, let's introduce a transformation in order to showing why Dafermos' idea on the Hunter-Saxton equation can be applied to the Camassa-Holm equation with initial data \eqref{initialu}.

Let's always denote
$$ w(t,x):= u_x(t,x).$$
Then for
the Hunter-Saxton equation, one has
\begin{equation}\label{hsw}
 w_t+u w_x=-\frac12 w^2.
\end{equation}
Taking advantage of the clean representation, one can  explicitly solve $w(t)$ along the characteristics and identify its explosion time. The proof in \cite{Daf} relies on this explicit information of solution along characteristics.

However, for the Camassa-Holm equation, the equation of $w=u_x$ changes to
\begin{equation}\label{hsw1}
w_t+uw_x=-w^2-P_{xx}=-\frac 12{w^2}+u^2-P.
\end{equation}
We notice that the  nonlocal term $P$  involving $H^1$-norm  of $u$ might not be small, even for the solution of  peakon-antipeakon case  when $t\rightarrow T_0-$, i.e. before the formation of cusp singularity. So \eqref{hsw1} is quite different from \eqref{hsw}.

While if we view \eqref{hsw1} in a different way, we can find some similarity between the Camassa-Holm and Hunter-Saxton equations. Now we introduce a transformation
$$
\alpha(t,x):=w(t,x)-\int_{-\infty}^x u(t,y)dy.$$
For the solution of \eqref{CH-eq}--\eqref{initialu} when $t\in[T_0-\varepsilon,T_0+\varepsilon]$ with small $\varepsilon$,
we will show that $\alpha$ behaves similarly as $w$ in the Hunter-Saxton equation \textcolor{red}.

From \eqref{CH-eq} and the definition of $P$, we know that
\bel{info}\left\{\ba{ll} \ad u_{tx}+uu_{xx}+u_x^2=-P_{xx},\\
\ns\ad P-P_{xx}=u^2+\frac{u_x^2}2,\\
\ns\ad \int_{-\infty}^xu_t(t,y)dy=-P-\frac{u^2}2.\ea\right.\eel
 Write $\alpha_\eta(t):=\alpha(t,x_\eta(t))$ where $x_\eta(\cdot)$ is the characteristic starting at $(0,\eta)$, i.e.
 $$\dot x_\eta(t)=u(t,x_\eta(t))=:u_\eta(t),\q x_\eta(0)=\eta.$$
By \eqref{info}, we assume the solution is regular enough, then simple calculations yield that,
\begin{equation}
\label{hsw2}
\ba{ll}\ds\dot\alpha_\eta(t)\ad=u_{tx}(t,x_\eta(t))+u_{xx}(t,x_\eta(t))\dot x_\eta(t)-\int_{-\infty}^{x_\eta(t)}u_t(t,y)dy-u(t,x_\eta(t))\dot x_\eta(t)\\
\ns\ad=-P_{xx}(t,x_\eta(t))-u_x^2(t,x_\eta(t))-u_{xx}(t,x_\eta(t))u_\eta(t)+u_{xx}(t,x_\eta(t))u_\eta(t)\\
\ns\ad\q+P(t,x_\eta(t))+\frac12u_\eta^2(t)-u_\eta^2(t)\\
\ns\ad=-\frac{u_x^2(t,x_\eta(t))}2+\frac12u_\eta^2(t)\\
\ns\ad=-\frac12\(\alpha_\eta(t)+\int_{-\infty}^{x_\eta(t)}u(t,y)dy\)^2+\frac12u_\eta^2(t).\ea
\end{equation}

To prove uniqueness for \eqref{CH-eq}-\eqref{initialu}, one only has to consider the solution in $t\in[T_0-\delta,T_0+\delta]$ for a fixed small positive constant $\delta$.
In this time interval, it is expected that $\Vert u\Vert_{L^1\cap L^\infty}\lesssim p(t)q(t)$ with  $p(t)q(t)$ small. Hence the differences between equations \eqref{hsw} and \eqref{hsw2} are small. In fact,  in \eqref{hsw2}, the terms $\int_{-\infty}^{x_\eta(t)}u(t,y)dy$ and  $\frac12u_\eta^2(t)$ are both small near time $T_0$. 

Now, although it is hard to
solve $\alpha$ explicitly, a good estimate  by \eqref{hsw2} similar as by \eqref{hsw} for  the Hunter-Saxton equation can be found when we start from the regular solution at $t=T_0-\delta$. So we can apply Dafermos' framework to show that $u\equiv 0$ is the only solution in a short time interval after $t=T_0$ by some subtle estimates. Then the solution will keep zero for all time after $T_0$.

The idea given in this section could give a proof of the main theorem, but we find it may be more convenient to prove our result directly using $w$ instead of $\alpha$. So we will still use $w$ for the rest of the paper.

\section{Characteristics}
The existence of characteristics for \eqref{CH-eq}--\eqref{initialu} can be easily proved by the H\"older continuity of solution. In fact, we can directly use the Lemma 3.1 in \cite{Daf}.


\begin{lemma}[\cite{Daf}]\label{chara}
Assume that the continuous function $u(t,x)$ is a (weak) solution of the balance law
$$
\partial_t u(t,x)+\partial_x(\frac{1}{2}u^2(t,x))=g(t,x)
$$
on $[0,\infty)\times(-\infty,\infty),$ where $g(t,x)$ is a bounded measurable function such that
$g(t,\cdot)$ is continuous on $(-\infty,\infty)$ for fixed $t\in[0,\infty).$
Let $x=z(t)$ be a characteristic, namely a trajectory of the ordinary differential equation
$\dot x=u(t,x)$ on some time interval $[a,b]$. Then $z(t)$ together with the function
$v(t)=u(t,z(t))$ satisfy the system of ordinary differential equations
\begin{equation}\left\{
\begin{array}{rcl}
\dot{z}&=&v,\\
\dot{v}&=&g(t,z)
\end{array}
\right.
\end{equation}
on $[a,b]$. In particular, $v$ is Lipschitz continuous and $z$ is continuously
differentiable, with Lipschitz continuous derivative on $[a,b]$.
\end{lemma}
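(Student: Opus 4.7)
The plan is to use the weak formulation of the balance law, tested against a function concentrated along the putative characteristic $z$, to extract an ODE for $v(t) := u(t, z(t))$. Existence of $z$ on $[a, b]$ itself is immediate from Peano's theorem, since $u$ is continuous and locally bounded; the work lies in identifying $\dot v$.

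Fix an even, smooth, compactly supported mollifier $\rho$ with $\int \rho = 1$ and set $\rho_\e(s) := \e^{-1}\rho(s/\e)$. For $\psi \in C_c^\i((a,b))$, substitute the test function $\phi_\e(t,x) := \psi(t)\, \rho_\e(x - z(t))$ into the distributional identity
\bea
\iint \lt[u\phi_t + \tfrac12 u^2 \phi_x + g\phi\rt] dx\, dt = 0.
\eea
Computing $\phi_{\e,t}$ and $\phi_{\e,x}$ and using $\dot z = v$ to combine the $u$-terms, the identity reorganizes into
\bea
\int \psi'(t)\, u_\e(t, z(t))\, dt + \int \psi(t)\, g_\e(t, z(t))\, dt = -\int \psi(t)\, R_\e(t)\, dt,
\eea
where $u_\e, g_\e$ are the spatial mollifications of $u, g$ and the quadratic-flux commutator
\bea
R_\e(t) := \tfrac12 \int \big(u(t,x) - v(t)\big)^2 \rho_\e'(x - z(t))\, dx
\eea
appears because $\tfrac12 u^2 - uv = \tfrac12(u-v)^2 - \tfrac12 v^2$ and the $v^2$-piece integrates to zero against $\rho_\e'$. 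Continuity of $u$ and $g$ in $x$ gives $u_\e(t, z(t)) \to v(t)$ and $g_\e(t, z(t)) \to g(t, z(t))$ pointwise, and dominated convergence (using the uniform bounds on $u, g$ on the relevant compact tube) sends the first two integrals to $\int \psi' v\, dt$ and $\int \psi\, g(t, z(t))\, dt$ respectively.

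\emph{The main obstacle} is the vanishing of $\int \psi R_\e\, dt$ as $\e \to 0$. Rescaling $x = z(t) + \e y$ gives $|R_\e(t)| \les C \o_u(t;\e)^2/\e$, where $\o_u(t;\cdot)$ is the local modulus of continuity of $u(t, \cdot)$ near $z(t)$; this bound does \emph{not} vanish pointwise when $u$ is merely H\"older $1/2$, so the delicate step is to show that uniform continuity of $u$ on the compact tube $\{(t,x) : t \in [a,b],\ |x-z(t)| \les 1\}$, combined with time integration and the fact that $\rho'$ has mean zero, still forces $\int \psi R_\e\, dt \to 0$ (following the strategy in \cite{Daf}). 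Once the remainder disappears, the identity $\int [v\psi' + g(t,z)\psi]\, dt = 0$ holding for every $\psi \in C_c^\i((a,b))$ yields $\dot v = g(t,z)$ distributionally; since $g$ is bounded, $v$ has a Lipschitz representative on $[a,b]$, and then $\dot z = v$ makes $z$ continuously differentiable with Lipschitz derivative, as claimed.
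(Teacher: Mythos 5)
First, note that the paper itself gives no proof of this lemma: it is quoted verbatim from Dafermos \cite{Daf} (Lemma 3.1 there) and used as a black box, so the comparison here is really with Dafermos' argument. Your setup is fine up to the point you yourself flag as ``the main obstacle,'' but that obstacle is exactly where the entire content of the lemma lives, and your proposal does not close it. The commutator bound $|R_\e(t)|\les C\,\o_u(t;\e)^2/\e$ genuinely fails to vanish for $u$ that is only H\"older-$\tfrac12$ (which is the relevant regularity for $H^1$ solutions), and the two mitigating factors you invoke do not help: time integration only replaces the pointwise modulus by an $L^2_x$-in-time modulus, which is no better a priori, and the mean-zero property of $\rho_\e'$ has already been spent in removing the $\tfrac12 v^2$ term --- it gives nothing further for the genuinely quadratic remainder $\tfrac12\int(u-v)^2\rho_\e'(x-z)\,dx$, since $(u-v)^2$ has no sign-cancellation against $\rho_\e'$ that you have identified. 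Worse, the symmetric mollifier actively destroys the structure that makes the statement true: writing $\Phi(s):=\int\psi(t)\big(u(t,z(t)+s)-v(t)\big)^2\,dt\ges 0$, your remainder is $\tfrac12\int\rho_\e'(s)\Phi(s)\,ds$, a \emph{difference} of two nonnegative one-sided contributions, and there is no direct way to see that either one vanishes.

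Dafermos' actual proof exploits precisely the one-sided sign that your symmetric $\rho_\e$ averages away. One integrates the balance law over the thin strips $\{z(t)<x<z(t)+\d\}$ and $\{z(t)-\d<x<z(t)\}$ between times $\si$ and $\t$ (equivalently, uses one-sided kernels $\tfrac1\d\chi_{[0,\d]}$ and $\tfrac1\d\chi_{[-\d,0]}$). The lateral flux terms combine, via $\tfrac12u^2-uv=\tfrac12(u-v)^2-\tfrac12v^2$, into $\pm\tfrac1{2\d}\int_\si^\t\big(u(t,z(t)\pm\d)-v(t)\big)^2dt$ with a \emph{definite sign} on each side. Dividing by $\d$ and letting $\d\to0^+$, the strip on the right yields $v(\t)-v(\si)\les\int_\si^\t g(t,z(t))\,dt$ and the strip on the left yields the reverse inequality, so equality holds without ever showing that the quadratic term is $o(\d)$ directly --- it is squeezed between the two one-sided estimates. (A posteriori this does show $\Phi(\pm\d)/\d\to0$, which is what your $\int\psi R_\e\,dt\to0$ would require; but deriving that fact is the theorem, not an input to it.) To repair your argument you should either replace $\rho_\e$ by one-sided kernels and run the two resulting inequalities against each other, or simply reproduce the strip argument; as written, the step ``following the strategy in \cite{Daf}'' is a placeholder for the whole proof.
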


For Camassa-Holm equation,
$g(x,t)=-P_x$ is continuous, so this lemma provides the existence of characteristics for  \eqref{CH-eq}--\eqref{initialu}. Thus, by Lemma \ref{chara}, any characteristic $x=x_\zeta(t)$ starting at $(0,\zeta)$ is a continuously differentiable function, with \begin{equation}\label{char1}
\dot x_\zeta(t)=u_\zeta(t),\end{equation}
where
	$u_\zeta(t)=u(t,x_\zeta(t))$ is a Lipschitz function satisfying
\begin{equation}\label{ueq}
\dot u_\zeta(t)=-P_x(t,x_\zeta(t)),\end{equation}
for almost all $t\geq 0.$

For the future use, we make several notations following by some easy computations.
Consider a given point $(0,\eta)$ and the characteristic $x=x_\eta(t)$ initiated from it. Then we write $x=x_\xi(t)$ as the characteristic starting at some point $(0,\xi)$ with $\xi\neq\eta$, and set
$$f(t)=x_\xi(t)-x_\eta(t),\quad \text{ and }\quad g(t)=u_\xi(t)-u_\eta(t),$$
with $u_\xi(t)=u(t,x_\xi(t)), u_\eta(t)=u(t,x_\eta(t))$.
In view of \eqref{char1} and \eqref{ueq}, one has some useful identities
\begin{equation}\label{fgdot}
\dot f(t)=g(t) \quad {\rm and}\quad\dot g(t)=-P_x(t,x_\xi(t))+P_x(t,x_\eta(t)).
\end{equation}
holding for any time.

On the other hand, define
\begin{equation}\label{Txieta}T_{\xi,\eta}:=\inf\{t>0:x_\xi(t)=x_\eta(t)\}.
\end{equation}
Then for $t\in [0,T_{\xi,\eta})$,
we define
$$\omega(t)=\frac{g(t)}{f(t)}.$$
By the first equation in \eqref{fgdot}, it is clear that, for $t\in [0,T_{\xi,\eta})$,
$\dot f(t)=g(t)=\omega(t)f(t).$
This implies that,
\bel{derx}f(t)=(\xi-\eta)\exp\int_0^t \omega(s)ds,\q 0\leq t<T_{\xi,\eta}.\eel
The map $(t,\eta)\rightarrow (t,x_\eta(t))$ is locally Lipschitz on $[0,T_{\xi,\eta})\times(-\infty,\infty)$.
Moreover, according to the definition of $\omega(t)$ and \eqref{fgdot}, we can establish the differential equation for $\omega$.  For $t\in [0,T_{\xi,\eta})$, it follows that
\begin{equation}\label{west}
\ba{ll}\dot \omega(t)\ad=\frac{\dot g(t)}{f(t)}-\frac{g(t)\dot f(t)}{f^2(t)}=\frac{\dot g(t)}{f(t)}-\omega^2(t)\\
\ns\ad=-\frac{P_x(t,x_\xi(t))-P_x(t,x_\eta(t))}{x_\xi(t)-x_\eta(t)}-\omega^2(t).\ea
\end{equation}

For the initial value problem  \eqref{CH-eq}--\eqref{initialu}, from Section \ref{sec_before}, we know that the blowup is caused by the concentration of characteristics starting between the interval $[-q_0,q_0]$. Note that
$x_{q_0}(t)=q(t)$  and $x_{-q_0}(t)=-q(t)$ are two  characteristics starting at $q_0$ and $-q_0$ respectively and merging to $x=0$ as $t$ approaches the breaking time $T_0$, defined at \eqref{initialvalue}.
In what follows, we use two lemmas to show that:
\begin{itemize}
\item[{\bf 1.}] Characteristics starting at the point $(0,x)$ with $x\in[-q_0,q_0]$ will all 
merge to a characteristic $x=0$ as $t\rightarrow T_0^-$, and keep on this characteristic for $t\in[T_0, T_0+\delta]$ with some positive constant $\delta_0$.
\item[{\bf 2.}] There exists a constant $\delta_0$, such that, all characteristics starting at $(0,x)$ with $x\not\in[-q_0,q_0]$ will not cause blowup when $t\in[0,T_0+\delta_0]$, i.e. $T_{\xi,\eta}\geq T_0+\delta_0$, for any $\xi,\eta\not\in[-q_0,q_0]$.
\end{itemize}

\begin{lemma}\label{q0lem} For any dissipative solution of \eqref{CH-eq}--\eqref{initialu}, if $\xi$, $\eta\in [-q_0,q_0]$, then $T_{\xi,\eta}=T_0$, where $T_{\xi,\eta}$ and $T_0$ are defined in \eqref{Txieta} and \eqref{initialvalue}, respectively. Furthermore,
	\bel{derx2}\frac{dx_\xi(t)}{d\xi}=0,\text{ for any $\xi\in[-q_0,q_0]$ and $t\geq T_0$}.\eel
\end{lemma}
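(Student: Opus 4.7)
The plan is to handle the two assertions separately. For the first, $T_{\xi,\eta}=T_0$, I would use only the explicit regular solution recalled in Section~\ref{sec_before}. On $[0,T_0)$ the solution is classical, so $u(t,\cdot)$ is Lipschitz in $x$ with bounded Lipschitz constant on compacts, and the ODE $\dot x = u(t,x)$ has unique solutions; consequently two distinct characteristics cannot meet on $[0,T_0)$, giving $T_{\xi,\eta}\ge T_0$. On the other hand, \eqref{charq0} shows that every characteristic starting in $[-q_0,q_0]$ tends to $0$ as $t\to T_0^-$, so by continuity they all agree at $t=T_0$ and hence $T_{\xi,\eta}\le T_0$.

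For \eqref{derx2}, the approach is to fix $\xi,\eta\in[-q_0,q_0]$ and prove that the function $f(t):=x_\xi(t)-x_\eta(t)$ vanishes identically on $[T_0,\infty)$, which amounts to showing that all characteristics starting in $[-q_0,q_0]$ coincide after $T_0$. Suppose not; by symmetry there exists $t^{*}>T_0$ with $f(t^{*})>0$. Define $t_1:=\sup\{s\in[T_0,t^{*}]:f(s)\le 0\}$; continuity and $f(T_0)=0$ force $f(t_1)=0$ and $f(s)>0$ on $(t_1,t^{*}]$. On this interval $x_\xi(s)>x_\eta(s)$ and I would combine the Oleinik bound \eqref{ol} with the $H^1$ regularity of $u(s,\cdot)$ to obtain
$$
\dot f(s)=u_\xi(s)-u_\eta(s)=\int_{x_\eta(s)}^{x_\xi(s)}u_x(s,y)\,dy\le C\bigl(1+s^{-1}\bigr)f(s),
$$
so that Gronwall's inequality on $[t_1,t^{*}]$ with zero initial value $f(t_1)=0$ would force $f(t^{*})\le 0$, contradicting $f(t^{*})>0$. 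The case $f(t^{*})<0$ is identical after interchanging $\xi$ and $\eta$.

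The delicate point is the passage from the Oleinik inequality \eqref{ol}, which controls $u_x$ only almost everywhere, to the pointwise one-sided Lipschitz estimate used in the displayed line. This step relies on the fact that for each $t>0$, $u(t,\cdot)\in H^1(\dbR)$ admits an absolutely continuous representative for which the fundamental theorem of calculus holds, so that integrating \eqref{ol} over the compact interval $[x_\eta(s),x_\xi(s)]$ produces the required pointwise bound. Once $f\equiv 0$ on $[T_0,\infty)$ is established, \eqref{derx2} is immediate because $x_\xi(t)$ is independent of $\xi\in[-q_0,q_0]$ for every such $t$.
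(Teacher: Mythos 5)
Your proposal is correct and follows essentially the same route as the paper: both arguments reduce the merging of characteristics to the one-sided Oleinik bound on $u_x$, which yields the differential inequality $\dot f\le Kf$ and hence, by Gronwall/ODE comparison together with \eqref{charq0}, that $f\equiv 0$ after $T_0$. Your added care about the lower bound $T_{\xi,\eta}\ge T_0$ (via uniqueness of characteristics for the classical solution on $[0,T_0)$), the sign of $f$, and the passage from the a.e.\ bound \eqref{ol} to the integrated estimate via the absolutely continuous representative of $u(t,\cdot)\in H^1$ only makes explicit what the paper leaves implicit.
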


\begin{proof} By the definition of dissipative solution, we know that
$w(t,x)$ is bounded above in any bounded time interval, so $\displaystyle\omega(t)=\frac{g}{f}$ is locally bounded above. This means that in any time interval, there exists a constant $K$, such that
$$
\frac{g}{f}\leq K.$$
Then by the first equation of \eqref{fgdot}, in this time interval,
\bel{fkf}
\dot{f}\leq Kf.
\eel
Then by the ODE comparison theorem, we know that if $f=x_\xi(t)-x_\eta(t)$ vanishes at some time $T$,  it also vanishes at any time after $T$. This shows that if any two characteristics meet, they will merge into a single characteristic curve.

In particular, in our problem, we can use this result to prove that characteristics emanating from any point in $[-q_0,q_0]$ will merge to a characteristic $x=0$ after $t=T_0$. More precisely,
if $\xi,\eta\in [-q_0,q_0]$, by \eqref{charq0}, we see that
$$\lim_{t\rightarrow T_0^{-}}x_\xi(t)=\lim_{t\rightarrow T_0^-}x_\eta(t)=0$$
and thus by our discussion,
$$x_\xi(t)-x_\eta(t)=0\text{ for } t\geq T_0,$$
and
$$\frac{dx_\xi(t)}{d\xi}=0\q\text{for } \xi\in[-q_0,q_0] \text{ and }t\geq T_0.$$
So
$$T_{\xi,\eta}=T_0\q\text{for any $\xi,\eta\in [-q_0,q_0]$}.$$
This completes the proof of this lemma.
\end{proof}

The next lemma addresses the non-concentration of characteristics staring from $(0,\xi)$ with $\xi\in(-\infty, -q_0)\cup(q_0,\infty)$
before $T_0+\delta_0$ for some constant $\delta_0$ to be clarified in this lemma.

\begin{lemma}\label{Tuniformbound} We consider any dissipative solution of \eqref{CH-eq}--\eqref{initialu}.
There exists a uniform positive constant $\delta_0$ only depending on the initial energy $E_0$, such that, for all $\xi,\eta\in(-\infty, -q_0)\cup(q_0,\infty)$,
	 $$T_{\xi,\eta}\geq T_0+\delta_0.$$
\end{lemma}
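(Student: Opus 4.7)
The plan is to control the logarithmic derivative $\omega(t) = g(t)/f(t)$ (with $f = x_\xi - x_\eta$, $g = u_\xi - u_\eta$) across the blowup time $T_0$ via a Riccati-type comparison, using a clean initial value $\omega(T_0)=0$ supplied by the explicit pre-blowup solution. Without loss of generality I take $\xi > \eta$ so that $f(0) = \xi - \eta > 0$, and $f(t) > 0$ up to $T_{\xi,\eta}$.

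First, I use the explicit solution from Section \ref{sec_before} to obtain the initial conditions at $T_0$. Since $e^{x_\zeta(T_0)} = e^\zeta - e^{q_0} + 1$ for $\zeta > q_0$ (symmetrically for $\zeta < -q_0$), the map $\zeta \mapsto x_\zeta(T_0)$ is a strictly increasing diffeomorphism of $(q_0,\infty)$ onto $(0,\infty)$; when $\xi,\eta$ are on opposite sides of $[-q_0,q_0]$, the characteristics stay on opposite sides of $x=0$. Hence $f(T_0) > 0$ for every admissible pair. Moreover, along such outside characteristics the explicit formula and \eqref{pqlim} give $u_\xi(t), u_\eta(t) \to 0$ as $t \to T_0^-$, and the joint H\"older continuity of $u$ together with the continuity of characteristics (Lemma \ref{chara}) imply $u(T_0, x_\xi(T_0)) = u(T_0, x_\eta(T_0)) = 0$. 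Thus $g(T_0) = 0$ and $\omega(T_0) = 0$.

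On $[T_0, T_{\xi,\eta})$ I apply \eqref{west}: $\dot\omega = -\omega^2 - \rho$, where $\rho(t) = [P_x(t,x_\xi) - P_x(t,x_\eta)]/f(t)$. The key one-sided bound is $\rho(t) \leq E_0/2$: since $P_{xx} = P - u^2 - u_x^2/2 \leq P$ (as $u^2, u_x^2 \geq 0$) and $P(t,x) \leq \tfrac{1}{2}\int e^{-|x-y|}(u^2 + u_x^2/2)\,dy \leq E(t)/2 \leq E_0/2$, one has $P_x(t,x_\xi) - P_x(t,x_\eta) = \int_{x_\eta}^{x_\xi} P_{xx}(t,y)\,dy \leq (E_0/2)\, f(t)$. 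Therefore $\dot\omega \geq -\omega^2 - E_0/2$, and comparison with the Riccati ODE $\dot W = -W^2 - E_0/2$, $W(T_0)=0$, whose explicit solution is $W(t) = -\sqrt{E_0/2}\tan(\sqrt{E_0/2}(t-T_0))$, yields $\omega(t) \geq W(t)$.

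Finally, setting $\delta_0 = \pi/(2\sqrt{2E_0})$ gives $\sqrt{E_0/2}\,\delta_0 = \pi/4$, so $W(t) \geq -\sqrt{E_0/2}$ on $[T_0, T_0+\delta_0]$. By \eqref{derx}, $f(t) \geq f(T_0)\exp(-\sqrt{E_0/2}\,\delta_0) = f(T_0)\, e^{-\pi/4} > 0$, so $T_{\xi,\eta} \geq T_0 + \delta_0$, with $\delta_0$ depending only on $E_0$. The main subtlety is uniformity in $\xi,\eta$: a naive integration using $|\dot g| \leq E_0$ and $g(T_0)=0$ only gives $|f(t) - f(T_0)| \leq (E_0/2)(t-T_0)^2$, hence $T_{\xi,\eta} - T_0 \geq \sqrt{2 f(T_0)/E_0}$, which degenerates as $\xi \to q_0^+, \eta \to q_0^+$. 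The Riccati comparison bypasses this by controlling the logarithmic derivative rather than $f$ itself, producing a multiplicative (scale-invariant) lower bound on $f$ that is insensitive to the magnitude of $f(T_0)$.
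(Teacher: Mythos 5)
Your proof is correct and follows essentially the same route as the paper: a Riccati comparison for $\omega=g/f$ starting from $\omega(T_0)=0$, with the source term bounded via the conserved energy, and then \eqref{derx} to keep $f>0$ on a window of length $O(1/\sqrt{E_0})$. The only differences are cosmetic — the paper keeps the $\tfrac12\int w^2$ term and absorbs it against $\omega^2$ by Cauchy--Schwarz (getting $\dot\omega\geq-\tfrac12\omega^2-E_0$, versus your $\dot\omega\geq-\omega^2-E_0/2$ obtained by dropping the nonnegative terms), and you supply a slightly more explicit justification that $f(T_0)>0$.
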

\begin{proof}  First, by \eqref{uxt0},  for all $\xi,\eta\in(-\infty, -q_0)\cup(q_0,\infty)$, it holds that
	 $$T_{\xi,\eta}\geq T_0.$$
Thus we only have to consider solutions after $T_0$.
For two different characteristics $x=x_\xi(t)$ and $x=x_\eta(t)$ with  $\xi>\eta$, we turn to estimate the differential equation \eqref{west}. Recall that the energy is bounded by the initial energy $E_0$. Then we have
\begin{equation}\label{west1}\ba{ll}\ad -P_x(t,x_\xi(t))+P_x(t,x_\eta(t))-\frac12\int_{x_\eta(t)}^{x_\xi(t)}w^2(t,y)dy\\
\ns\ad\q=-\frac12\int_{x_\xi(t)}^{+\infty} e^{x_\xi(t)-y}\(u^2(t,y)+\frac12 w^2(t,y)\)dy+\frac12\int^{x_\xi(t)}_{-\infty} e^{y-x_\xi(t)}\(u^2(t,y)+\frac12 w^2(t,y)\)dy\\
\ns\ad\qq+\frac12\int_{x_\eta(t)}^{+\infty } e^{x_\eta(t)-y}\(u^2(t,y)+\frac12 w^2(t,y)\)dy-\frac12\int^{x_\eta(t)}_{-\infty} e^{y-x_\eta(t)}\(u^2(t,y)+\frac12 w^2(t,y)\)dy\\
\ns\ad\qq-\frac12\int_{x_\eta(t)}^{x_\xi(t)}w^2(t,y)dy\geq -E_0(x_\xi(t)-x_\eta(t))\q \text{ for a.e. }t\in[0, T_{\xi,\eta}).\ea \end{equation}
Moreover, by virtue of Schwartz's inequality, it holds that
\begin{equation}\label{west2}\Big(u(t,x_\xi(t))-u(t,x_\eta(t))\Big)^2\leq  (x_\xi(t)-x_\eta(t))\int_{x_\eta(t)}^{x_\xi(t)}w^2(t,y)dy. \end{equation}
Utilizing \eqref{west} and the estimates \eqref{west1}, \eqref{west2} to get
\begin{equation}\label{lower}
\begin{split}\dot \omega(t)&\geq -E_0+\frac12\frac{1}{x_\xi(t)-x_\eta(t)}\int_{x_\eta(t)}^{x_\xi(t)}w^2(t,y)dy-\omega^2(t)\q \\
&\geq -\frac12 \omega^2(t)-E_0,\q \text{ for a.e. }  t\in [0.T_{\xi,\eta}).
\end{split} \end{equation}
By \eqref{ut0}, $u(T_0,\cdot)=0$. This implies that $\omega(T_0)=0$. Thus we conclude  that
\bel{omegaxieta}\omega(t)\geq \sqrt{2E_0}\tan(-\sqrt {E_0/2} (t-T_0))>-\infty,\q\text{for } T_0\leq t < T_0+\frac{\pi}{\sqrt{2E_0}}.\eel
Note that for any $\xi>\eta\geq \eta'$, it follows that $T_{\xi,\eta}\leq T_{\xi,\eta'}$ from \eqref{derx},  \eqref{omegaxieta} holds for any $\xi,\eta\in(-\infty,-q_0)\cup(q_0,\infty)$.
This completes the proof of this lemma.
\end{proof}

\begin{remark}
We remark that the results in Lemmas \ref{q0lem} and \ref{Tuniformbound} are not new. For example, some similar results on the explosion time of Camassa-Holm equation for general initial data have been done by Grunert in \cite{GHR1-1,GHR1-0}. Here we present the proof for our special peakon and anti-peakon  initial data for reader's convenience.
\end{remark}

Finally, for future use, we give a technical lemma. The proof of this Lemma is elementary. We add it for reader's convenience.

\begin{lemma}\label{estriccati}
For any given $K_0>0$, we consider any continuous function  $g(t)$, $t\geq 0$, with $\displaystyle\sup_t|g(t)|=K_g<K_0$. Then there exists a positive constant $\delta_0$ depending only on $K_0$, such that any solution $w(t)$ to the following ODE
$$ \dot w(t)+\frac12w^2(t)=g(t),\q w(T_0)=0$$ satisfies
$$|w(t)|\leq 3K_g\d_0,\q\hbox{when }\ t\in[T_0,T_0+\d_0].$$
\end{lemma}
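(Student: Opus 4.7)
The plan is to treat this as a Riccati-type ODE with small forcing and run a standard bootstrap on the interval $[T_0, T_0 + \delta_0]$. Heuristically, since $w(T_0)=0$ and $|g|\leq K_g$, we expect $\dot w \approx g$ near $T_0$, so $w(t)\approx\int_{T_0}^t g(s)\,ds$, which already gives $|w|\leq K_g\delta_0$; the quadratic term $-\tfrac12 w^2$ is a small correction that we absorb by choosing $\delta_0$ small in terms of $K_0$ alone.

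First I would observe that, by continuity of $g$, the ODE admits a local $C^1$ solution past $T_0$, and the solution can be continued as long as $w$ remains bounded. Then I would set
\[
T^* := \sup\bigl\{\,T\in[T_0,T_0+\delta_0]\,:\, |w(t)|\leq 3K_g\delta_0 \text{ for all } t\in[T_0,T]\,\bigr\},
\]
which is well-defined and positive since $w(T_0)=0$. On $[T_0,T^*]$, the ODE and the bound $|w|\leq 3K_g\delta_0$ give
\[
|\dot w(t)| \;\leq\; K_g + \tfrac{1}{2}w^2(t)\;\leq\; K_g + \tfrac{9}{2}K_g^{2}\delta_0^{2}.
\]
Integrating from $T_0$ to any $t\in[T_0,T^*]$ and using $w(T_0)=0$ yields
\[
|w(t)| \;\leq\; \bigl(K_g + \tfrac{9}{2}K_g^{2}\delta_0^{2}\bigr)(t-T_0)\;\leq\; \bigl(K_g + \tfrac{9}{2}K_g^{2}\delta_0^{2}\bigr)\delta_0.
\]

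Next I would choose $\delta_0$ depending only on $K_0$, e.g.\ $\delta_0 \leq \sqrt{4/(9K_0)}$, so that $\tfrac{9}{2}K_g\delta_0^{2}\leq\tfrac{9}{2}K_0\delta_0^{2}\leq 2$, giving $K_g+\tfrac{9}{2}K_g^{2}\delta_0^{2}\leq 3K_g$. Therefore $|w(t)|\leq 3K_g\delta_0$ with strict inequality at $t=T^*$ unless $T^*=T_0+\delta_0$. By continuity of $w$, this forces $T^*=T_0+\delta_0$, which is the desired bound.

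There is no serious obstacle here; the only delicate point is to make sure the bootstrap strictly improves the bound (so that $T^*$ cannot be an interior point), and to keep $\delta_0$ depending only on $K_0$ rather than on the particular $K_g<K_0$, which is why the coefficient $\tfrac{9}{2}K_g^{2}\delta_0^{2}$ is controlled by $\tfrac{9}{2}K_0\delta_0^{2}$ at the key step.
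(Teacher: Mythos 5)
Your proof is correct, but it takes a different route from the paper's. The paper treats the two sides of the bound asymmetrically: the upper bound $w(t)\le K_g\delta_0$ is immediate because $\dot w\le g\le K_g$ (the quadratic term only pushes $w$ down), while the lower bound is obtained by comparison with the explicit Riccati solution, writing $\frac{2\dot w}{w^2+2K_g}\ge -1$ and integrating to get $w(t)\ge \sqrt{2K_g}\,\tan\bigl(-\sqrt{K_g/2}\,(t-T_0)\bigr)\ge -3K_g\delta_0$ for $\delta_0$ small in terms of $K_0$. You instead run a symmetric continuity/bootstrap argument: assume $|w|\le 3K_g\delta_0$ up to time $T^*$, bound $|\dot w|\le K_g+\tfrac92K_g^2\delta_0^2$, integrate, and close the bootstrap by choosing $\delta_0^2\lesssim 1/K_0$ so that the resulting bound is strictly better on $[T_0,T^*]$ when $T^*<T_0+\delta_0$. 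Both arguments are sound and both yield a $\delta_0$ depending only on $K_0$; the paper's comparison gives the sharper one-sided information $w\le K_g\delta_0$ and makes visible the actual blow-down mechanism (the $\tan$ threshold $\pi/\sqrt{2K_g}$, which is also used in Lemma 4.3), whereas yours avoids solving any comparison ODE and is more robust, e.g.\ it would survive replacing $\tfrac12 w^2$ by any term that is $O(w^2)$. Two cosmetic points: in the degenerate case $K_g=0$ your strict-inequality step is vacuous, but then $g\equiv 0$ and $w\equiv 0$ by uniqueness for $\dot w=-\tfrac12 w^2$, $w(T_0)=0$, so the conclusion still holds; and positivity of $T^*-T_0$ likewise uses continuity of $w$ together with $|w(T_0)|=0<3K_g\delta_0$ when $K_g>0$.
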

\begin{proof}
A direct computation on the ODE yields that
$$\frac{2\dot w(t)}{w^2(t)+2K_g}\geq -1,$$
thus, there exists a $\d_0$ depending only on $K_0$ such that when $t\in[T_0,T_0+\d_0]$,
$$w(t)\geq \sqrt{2K_g}\tan(-\sqrt{K_g/2} (t-T_0))\geq -3K_g\d_0.$$
On the other hand, it is easy to see that $\dot w(t)\leq K_g$
which implies $$w(t)\leq K_g \d_0\q\text{ for 	}\q t\in[T_0,T_0+\d_0].$$
This completes the proof of this lemma.
\end{proof}

When 
applying Lemma \ref{estriccati} to our problem \eqref{CH-eq}-\eqref{initialu}, the source term $g=-P_x$ is continuous and uniformly bounded. 

We might get two different $\delta_0$'s in  Lemmas  \ref{q0lem} and \ref{Tuniformbound}. For simplicity, we just choose $\delta_0$ to be the smaller one, such that both two lemmas hold.

\section{Proof of Theorem \ref{mainthm}}

 Now we are ready to prove Theorem \ref{mainthm}, on the uniqueness of dissipative solution for the Camassa-Holm equation with peakon-antipeakon initial data \eqref{initialu}.
 
 This section includes all new estimates in this paper. To prove the uniqueness result, our basic idea comes from \cite{Daf}. We have already discussed the difficulty of using the method of \cite{Daf} in Section \ref{section2}. Here we will give an application of the idea in \cite{Daf} on the peakon-antipeakon problem, where we use that the solution $u$ has small amplitude near the formation of singularity.

\begin{proof}
The first and also the main goal is to prove that the dissipative solution $u$ is uniquely equal zero, in a time interval $(T_0,T_0+\delta_0]$, with a sufficiently small positive constant $\delta_0$ depending on $E_0$ only.

According to Lemma \ref{q0lem}, any characteristics initially from the interval $[-q_0,q_0]$ will merge into a single characteristic curve $x=0$ for all $t\geq T_0$.

Thus, we only have to consider the characteristics, such as  $x_\xi$, $x_\eta$, emanating from $(-\infty, -q_0)\cup (q_0,\infty)$.  By Lemma \ref{Tuniformbound}, we know there exists $\delta_0$, such that any two different characteristics $x_\xi$ and $x_\eta$ will not meet each other in $(T_0,T_0+\delta_0]$. Later, the value of $\delta_0$ may need to be chosen smaller as needed. For simplicity, we will keep using $\delta_0$ to denote it.

To begin with, we take a uniform small constant $\d_0>0$ such that $$\tan(\sqrt{E_0/2}\delta_0)\leq 3\sqrt{E_0/2}\delta_0,$$
where recall that $E(t)$ defined in \eqref{edef} is the energy, and $E_0=E(0)$ is the initial energy.

\paragraph{\bf 1.}
By Lemma \ref{Tuniformbound}, it follows that the map $(t,\zeta)\mapsto (t,x_\zeta(t)) $ restricted to the set $\{(t,\xi);~\xi\in(-\infty,-q_0)\cup(q_0,\infty),t\in[0,T_0+\d_0]\}$ is locally Lipschitz one-to-one  and the inverse is locally Lipschitz.
	
Let $\Gamma$ be the set of $(t,x)$ such that $w(t,x)=u_x(t,x)$ exists and \begin{equation*}
\lim_{\epsilon\rightarrow0}\int_x^{x+\epsilon}w^2(t,y)\,dy
=w^2(t,x),\end{equation*}
 has a full measure in  $[0,T_0+\d_0]\times(-\infty,\infty)$.
Thus there exists a subset $J$ of $(-\infty,-q_0)\cup(q_0,\infty)$ such that $\zeta\in J$ implies $(t,x_\zeta(t))\in\Gamma$. Here $J$ has full measure in $(-\infty,-q_0)\cup(q_0,\infty)$, for almost all $t$ in $[0,T_0+\d_0$].

Given $\eta\in J$ and let $\xi\rightarrow\eta$, it follows that
	\bel{limit}\omega(t)\rightarrow w_\eta(t),\eel
 boundly almost everywhere for $t\in[0,T_0+\d_0]$, and \begin{equation}\label{wpoint}
 \frac{1}{x_\xi(t)-x_\eta(t)}\int_{x_\eta(t)}^{x_\xi(t)}w^2(t,y)\,dy\rightarrow w_\eta^2(t)
 \end{equation}
almost everywhere on $[0,T_0+\d_0],$	where $w_\eta(t)=w(t,x_\eta(t)).$

\paragraph{\bf 2.}	
Integrating \eqref{west} over any interval $[t_0,t_1]\subset [0,T_0+\d_0]$, with both $(t_0,x_\eta(t_0))$ and $(t_1,x_\eta(t_1))$ in $\Gamma$. By the fact that $P_{xx}=P-(u^2+\frac{1}{2}u_x^2)$, we have
\bel{equalintegration}\ba{ll}\omega(t_1)-\omega(t_0)\ad
=\int_{t_0}^{t_1}\Big(\frac{1}{x_\xi(t)-x_\eta(t)}\int_{x_\eta(t)}^{x_\xi(t)}\big(u^2+\frac12w^2-P\big)(t,y)\,dy\Big)\,dt
-\int_{t_0}^{t_1}\omega^2(t)\,dt.\ea\eel
By using \eqref{west2}, we can derive	$$\int_{t_0}^{t_1}\omega^2(t)\,dt\leq\int_{t_0}^{t_1}\Big(\frac{1}{x_\xi(t)-x_\eta(t)}
\int_{x_\eta(t)}^{x_\xi(t)}w^2(t,y)\,dy\Big)\,dt,$$
which together with \eqref{equalintegration} gives that		\bel{lowerbound}\omega(t_1)-\omega(t_0)\geq\int_{t_0}^{t_1}\Big(\frac{1}{x_\xi(t)-x_\eta(t)}\int_{x_\eta(t)}^{x_\xi(t)}\big(u^2-\frac12w^2-P\big)(t,y)\,dy\Big)\,dt.\eel	By the continuity of  $u$ and $P$, one has $$\lim_{\xi\rightarrow\eta}\frac{1}{x_\xi(t)-x_\eta(t)}\int_{x_\eta(t)}^{x_\xi(t)}\big(u^2-P\big)(t,y)\,dy=u^2(t,x_\eta(t))-P(t,x_\eta(t)).$$
Since  $P$ is bounded uniformly and $u$ is  bounded on any compact set, by the dominant convergent theorem, we have	\begin{equation}\label{updom}
\lim_{\xi\rightarrow\eta}\int_{t_0}^{t_1}\Big(\frac{1}{x_\xi(t)-x_\eta(t)}\int_{x_\eta(t)}^{x_\xi(t)}\big(u^2-P\big)(t,y)\,dy\Big)\,dt
=\int_{t_0}^{t_1}\Big(u^2(t,x_\eta(t))-P(t,x_\eta(t))\Big)\,dt.
\end{equation}
Letting $\xi\rightarrow \eta$ in \eqref{lowerbound} and by using \eqref{limit}, \eqref{wpoint} and \eqref{updom}, for almost all $t_0,t_1\in [0,T_0+\d_0]$, we derive
\bel{upper}w_\eta(t_1)-w_\eta(t_0)\geq\int_{t_0}^{t_1}
\Big(u^2(t,x_\eta(t))-P(t,x_\eta(t))\Big)\,dt-\frac12\int_{t_0}^{t_1}w_\eta^2(t)\,dt ,\quad\text{ for  }\eta\in J.\eel	

On the other hand, letting $\xi\rightarrow \eta$ in \eqref{equalintegration}, and using \eqref{limit}, \eqref{wpoint}, \eqref{updom}, the dominant convergence theorem and Fatou's Lemma,  for almost all $t_0,t_1\in[0,T_0+\d_0]$, it holds that	\bel{lower}\ba{ll}w_\eta(t_1)-w_\eta(t_0)\ad\leq\int_{t_0}^{t_1}\Big(u^2(t,x_\eta(t))-P(t,x_\eta(t))\Big)\,dt-\frac12 \int_{t_0}^{t_1}w_\eta^2(t)dt,\text{ for  }\eta\in J.\ea\eel
From \eqref{upper} and \eqref{lower}, we have  for almost all $t_0,t_1\in[0,T_0+\d_0]$,
\bel{riccatiw}	\ba{ll}w_\eta(t_1)-w_\eta(t_0)\ad=\int_{t_0}^{t_1}\Big(u^2(t,x_\eta(t))-P(t,x_\eta(t))\Big)\,dt-\frac12 \int_{t_0}^{t_1} w_\eta^2(t)\,dt,\text{ for  }\eta\in J.\ea \eel
This provides the equation for $w(t,x_\eta(t))$. To establish the bound for $w(t,x_\eta(t))$, in view of Lemma \ref{estriccati}, it suffices to give the estimates for $|u^2(t,x_\eta(t))-P(t,x_\eta(t))|$. we proceed as follows.

\paragraph{\bf 3.}
Note that the energy is bounded by the initial energy $E_0$. Recalling the definition of $P$ in \eqref{CH-eq2}, we obtain the uniform bounds
 \begin{equation}\label{ppxbound}\|P(t)\|_{L^\infty},\| P_x(t)\|_{L^\infty}\leq \|\frac{1}{2}e^{-|x|}\|_{L^\infty}\cdot\|u^2(t)+\frac{u_x^2(t)}{2}\|_{L^1}
 \leq \frac{1}{2}E_0. \end{equation}
Since by \eqref{ut0}, $ u(T_0,\cdot)=0$. The equation \eqref{ueq} implies
\begin{equation}\label{1}
|u(T_0+\d_0,x_\xi(T_0+\d_0))|=
\left|u(T_0,x_\xi(T_0))-\int_{T_0}^{T_0+\d_0}P_x(s,x_\xi(s))\,ds\right|\leq \frac{1}{2} E_0\d_0.\end{equation}
A further calculation from \eqref{ppxbound} and \eqref{1} yields
\begin{equation}\label{u-p1}
|u^2(t,x_\eta(t))-P(t,x_\eta(t))|\leq \frac{1}{4} E_0^2\d_0^2+\frac{1}{2} E_0\leq E_0.
\end{equation}
This together with  \eqref{riccatiw} and Lemma \ref{estriccati} gives rise to
\begin{equation}\label{w1}|w_\eta(t)|\leq 3E_0\d_0,\q \text{ for } t\in[T_0, T_0+\d_0].\end{equation}
	Let
	\begin{equation*}
	D_1:=3E_0.
	\end{equation*}
Thus we already proved that
\begin{equation}\label{uw1}
|u_\eta(t)|,|w_\eta(t)|\leq D_1\d_0,\q \text{ for } t\in[T_0, T_0+\d_0].
\end{equation}
The bound on \eqref{uw1} actually follows from the estimates for $P$ and $P_x$ of \eqref{ppxbound}. In the following, we re-derive this bound in terms of \eqref{derx}, \eqref{derx2} and \eqref{uw1}.

\paragraph{\bf 4.} From \eqref{derx}, for $\xi,\eta\in J$, sending $\xi\rightarrow\eta$  and by \eqref{limit}, we deduce $$\frac{dx_\eta(t)}{d\eta}=\exp\Big(\int_0^tw_\eta(s)ds\Big),\q t\in[0,T_0+\d_0].$$
	Together with \eqref{derx2}, it follows that for $t\in[T_0, T_0+\d_0]$, $$\frac{dx_{\eta}(t)}{d{\eta}}=\left\{\ba{ll} 0,\ad \q\text{for }\eta\in[-q_0,q_0]\\\ns\ds
	\exp\(\int_0^tw_\eta(s)ds\)\geq0, \ad\q \text{for }\eta\in(-\infty,-q_0)\cup(q_0,\infty), \ea\right.$$
	Then by the definition of $P$ in \eqref{CH-eq2} and \eqref{uw1}, it is straightforward to verify that
	\bel{estP}\ba{ll}P(t,x_\eta(t))\ad=\int_{-\infty}^\infty e^{-|x_\eta(t)-y|}\(u^2(t,y)+\frac{w(t,y)^2}2\)dy\\
	\ns\ad\leq\(\int_{-\infty}^{-q_0}+\int^{\infty}_{q_0}\) e^{-|x_\eta(t)-x_{\zeta}(t)|}\(u^2(t,x_{\zeta}(t))+\frac{w^2(t,x_{\zeta}(t))}2\)\frac{dx_{\zeta}(t)}{d\zeta}{d\zeta}\\
	\ns\ad\leq \frac32D_1^2\d_0^2\int_{-\infty}^\infty e^{-|x_\eta(t)-y|}dy\\
	\ns\ad \leq 3D_1^2\d_0^2. \ea\eel
	Similarly we can conclude that for $\eta\in J$,
	\bel{estPx}\ba{ll}|P_x(t,x_\eta(t))|\ad\leq \int_{-\infty}^{x_\eta(t)} e^{-(x_\eta(t)-y)}\(u^2(t,y)+\frac{w(t,y)^2}2\)dy+\int^{\infty}_{x_\eta(t)} e^{x_\eta(t)-y}\(u^2(t,y)+\frac{w(t,y)^2}2\)dy\\
	\ns\ad\leq3 D_1^2\d_0^2.\ea\eel
Now we have worked out the other detailed estimates for $P$ and $P_x$, which are different from \eqref{ppxbound}. A new estimate on $u_\eta(t)$ and $w_\eta(t)$ will thus follow from the new bounds on $P$ and $P_x$. Toward this goal, we proceed as in \eqref{1}--\eqref{uw1}.
	
For $t\in[T_0,T_0+\d_0]$, similar to the estimate in \eqref{1}, by \eqref{estPx}, we obtain \bel{u2}\ba{ll}|u(t,x_\eta(t))|\ad=\left|u(T_0,x_\eta(T_0))-\int_{T_0}^{t}P_x(s,x_\eta(s))ds\right|\leq 3D_1^2\d_0^3.\ea\eel
We can thus repeat the estimate in \eqref{u-p1}--\eqref{uw1}.  Apply \eqref{estP},  \eqref{u2} in \eqref{riccatiw} again and use Lemma \ref{estriccati}, it holds that
	$$|w_\eta(t)|\leq 3(3D_1^2\d_0^2+3D^2_1\d_0^3)\d_0\leq 18D_1^2\d_0^3, \q\text{ for } t\in[T_0, T_0+\d_0].$$
Let
$$D_2:=18D_1^2\d_0^2.$$
Thus we already proved that
\begin{equation}\label{uw2}
|u_\eta(t)|,|w_\eta(t)|\leq D_2\d_0,\q \text{ for } t\in[T_0, T_0+\d_0].
\end{equation}
\paragraph{\bf 5.} Recursively repeating the whole process for \eqref{estP}--\eqref{uw2}. We can get a sequence of $D_i$ following
	$$D_{i+1}=18 D_i^2\d_0^2$$
	and
	$$|u_\eta(t)|,|w_\eta(t)|\leq D_i\d_0,\q \text{ for } t\in[T_0, T_0+\d_0] \text{ and  any } i.$$
Since $\d_0$ is a uniform constant depending on $E_0$ only, we can choose $\d_0$ small enough such that $D_i$ converges to 0 as $i\rightarrow\infty$. Thus  for $\eta\in(-\infty,-q_0)\cup(q_0,\infty)$,
	$$|u(t,x_\eta(t))|=|w(t,x_\eta(t))|=0,\q t\in[T_0,T_0+\d_0].$$
	
		\bigskip
	
Finally,	since $u(t,x)$ is continuous by the definition of dissipative solution, we conclude that the dissipative solution uniquely equals to $0$ for  $t\in[T_0,T_0+\delta_0]$. Hence, we can claim that the solution uniquely vanishes for any time after $t=T_0$. In fact, the proof of this claim is classical. Alternatively, to make this paper self-contained, one can simply repeat our proof for the uniqueness when $t\in[T_0,T_0+\d_0]$ to show that the solution $u$ must vanish on  $[T_0+\d_0,T_0+2\d_0]$ , $\cdots$, $[T_0+n\d_0,T_0+(n+1)\d_0]$ and so on. This  completes the proof of Theorem \ref{mainthm}.

\end{proof}

\paragraph{\bf Acknowledgment:} The authors wish to thank for some helpful discussions with Tao Huang, Weizhang Huang and Qingtian Zhang, in preparing this paper.

\end{document}